\DeclareMathOperator{\B}{B}
\DeclareMathOperator{\op}{op}
\DeclareMathOperator{\WF}{WF}
\DeclareMathOperator{\BS}{BS}
\DeclareMathOperator{\pro}{prod}
\DeclareMathOperator{\mo}{mod}
\DeclareMathOperator{\LL}{L}
\DeclareMathOperator{\topo}{t}
\DeclareMathOperator{\todd}{Todd}
\DeclareMathOperator{\an}{a}
\DeclareMathOperator{\FL}{FL}
\DeclareMathOperator{\BU}{BU}
\DeclareMathOperator{\ind}{ind}
\DeclareMathOperator{\spin}{spin}
\DeclareMathOperator{\CS}{CS}
\DeclareMathOperator{\odd}{odd}
\DeclareMathOperator{\even}{even}
\DeclareMathOperator{\im}{Im}
\DeclareMathOperator{\ch}{ch}
\DeclareMathOperator{\rk}{rank}
\begin{document}
\setlength{\baselineskip}{1.5\baselineskip}
\theoremstyle{definition}
\newtheorem{coro}{Corollary}
\newtheorem*{note}{Note}
\newtheorem{thm}{Theorem}
\newtheorem*{ax}{Axiom}
\newtheorem{defi}{Definition}
\newtheorem{lemma}{Lemma}
\newtheorem*{claim}{Claim}
\newtheorem{exam}{Example}
\newtheorem{prop}{Proposition}
\newtheorem{remark}{Remark}
\newcommand{\wt}[1]{{\widetilde{#1}}}
\newcommand{\ov}[1]{{\overline{#1}}}
\newcommand{\wh}[1]{{\widehat{#1}}}
\newcommand{\poin}{Poincar$\acute{\textrm{e }}$}
\newcommand{\deff}[1]{{\bf\emph{#1}}}
\newcommand{\boo}[1]{\boldsymbol{#1}}
\newcommand{\abs}[1]{\lvert#1\rvert}
\newcommand{\norm}[1]{\lVert#1\rVert}
\newcommand{\inner}[1]{\langle#1\rangle}
\newcommand{\poisson}[1]{\{#1\}}
\newcommand{\biginner}[1]{\Big\langle#1\Big\rangle}
\newcommand{\set}[1]{\{#1\}}
\newcommand{\Bigset}[1]{\Big\{#1\Big\}}
\newcommand{\BBigset}[1]{\bigg\{#1\bigg\}}
\newcommand{\dis}[1]{$\displaystyle#1$}
\newcommand{\R}{\mathbb{R}}
\newcommand{\EE}{\mathbb{E}}
\newcommand{\N}{\mathbb{N}}
\newcommand{\Z}{\mathbb{Z}}
\newcommand{\Q}{\mathbb{Q}}
\newcommand{\E}{\mathcal{E}}
\newcommand{\EEE}{\mathscr{E}}
\newcommand{\G}{\mathcal{G}}
\newcommand{\F}{\mathcal{F}}
\newcommand{\V}{\mathcal{V}}
\newcommand{\W}{\mathcal{W}}
\newcommand{\SSS}{\mathcal{S}}
\newcommand{\h}{\mathbb{H}}
\newcommand{\g}{\mathfrak{g}}
\newcommand{\C}{\mathbb{C}}
\newcommand{\A}{\mathcal{A}}
\newcommand{\MM}{\mathcal{M}}
\newcommand{\HH}{\mathcal{H}}
\newcommand{\D}{\mathcal{D}}
\newcommand{\PP}{\mathcal{P}}
\newcommand{\M}{\mathcal{M}}
\newcommand{\KK}{\mathbb{K}}
\newcommand{\RRR}{\mathscr{R}}
\newcommand{\AAA}{\mathscr{A}}
\newcommand{\DDD}{\mathscr{D}}
\newcommand{\so}{\mathfrak{so}}
\newcommand{\gl}{\mathfrak{gl}}
\newcommand{\aaa}{\mathbb{A}}
\newcommand{\bbb}{\mathbb{B}}
\newcommand{\DD}{\mathsf{D}}
\newcommand{\RR}{\mathsf{R}}
\newcommand{\FF}{\mathbb{F}}
\newcommand{\GG}{\mathbb{G}}
\newcommand{\ccc}{\bold{c}}
\newcommand{\sss}{\mathbb{S}}
\newcommand{\cdd}[1]{\[\begin{CD}#1\end{CD}\]}
\normalsize
\title{Remarks on flat and differential $K$-theory}
\author[M.-H. Ho]{Man-Ho Ho}
\address{Department of Mathematics\\ Hong Kong Baptist University}
\email{homanho@math.hkbu.edu.hk}
\subjclass[2010]{Primary 19L50, 58J20}
\maketitle
\nocite{*}
\begin{abstract}
In this note we prove some results in flat and differential $K$-theory. The first one is
a proof of the compatibility of the differential topological index and the flat topological
index by a direct computation. The second one is the explicit isomorphisms between
Bunke-Schick differential $K$-theory and Freed-Lott differential $K$-theory.
\end{abstract}
\tableofcontents
\section{Introduction}
In this note we prove some results in flat and differential $K$-theory.  While some of
these results are known to the experts, the proofs given here have not appeared in the
literature. We first prove the compatibility of the flat topological index
$\ind^{\topo}_{\LL}$ and the differential topological index $\ind^{\topo}_{\FL}$
by a direct computation, i.e., the following diagram commutes (\cite[Proposition 8.10]{FL10})
\begin{equation}\label{eq 1}
\begin{CD}
K^{-1}_{\LL}(X; \R/\Z) @>i>> \wh{K}_{\FL}(X) \\ @V\ind^{\topo}VV @VV\ind^{\topo}_{\FL}V
\\ K^{-1}_{\LL}(B; \R/\Z) @>>i> \wh{K}_{\FL}(B)
\end{CD}
\end{equation}
where $i$ is the canonical inclusion, $K^{-1}_{\LL}(X; \R/\Z)$ is the geometric model of
$K$-theory with $\R/\Z$ coefficients and $\wh{K}_{\FL}(X)$ is Freed-Lott differential
$K$-theory. The commutativity of (\ref{eq 1}) is a consequence of the compatibility of
the differential analytic index $\ind^{\an}_{\FL}$ and the flat analytic index
$\ind^{\an}_{\LL}$ together with the differential family index theorem
\cite[Theorem 7.35]{FL10}. The differential topological index $\ind^{\topo}_{\FL}$ is
defined to be the composition of an embedding pushforward and a projection pushforward.
When defining the embedding pushforward, currential $K$-theory \cite[\S 2.28]{FL10} is used
instead of differential $K$-theory due to the Bismut-Zhang current \cite[Definition 1.3]{BZ93}.
It is not clear whether currential $K$-theory should be regarded as a differential cohomology
or a ``differential homology" (see \cite[\S 4.5]{BS10} for a detailed discussion), so it may
be clearer by looking at the direct computation.

Second we construct the unique natural isomorphisms between Bunke-Schick differential
$K$-theory \cite{BS09} and Freed-Lott differential $K$-theory by writing down the explicit
formulas, which are inspired by \cite[Corollary 5.5]{BS09}. The uniqueness follows from
\cite[Theorem 3.10]{BS10a}. Together with \cite[Theorem 4.34]{R08} and \cite[Theorem 1]{H12}
all the explicit isomorphisms between all the existing differential $K$-groups \cite{HS05},
\cite{BS09}, \cite{FL10}, \cite{SS10} are known.

The paper is organized as follows: Section 2 contains all the necessary background
material, including the Freed-Lott differential $K$-theory, the differential topological
index, the pairing between flat $K$-theory and $K$-homology, and Bunke-Schick differential
$K$-theory. In Section 3 we prove the main results.

\section*{Acknowledgement}

The author would like to thank Steve Rosenberg for valuable comments and suggestions, and
Thomas Schick for his comments on the explicit isomorphisms between Bunke-Schick
differential $K$-theory and Freed-Lott differential $K$-theory.

\section{Background material}
\subsection{Freed-Lott differential $K$-theory and the differential topological index}

In this section we review Freed-Lott differential $K$-theory and the construction of the
differential topological index \cite[\S 4, 5]{FL10}. We refer the readers to \cite{FL10}
for the details.

The Freed--Lott differential $K$-group $\wh{K}_{\FL}(X)$ is the abelian group generated
by quadruples $\E=(E, h, \nabla, \phi)$, where $(E,h,\nabla)\to X$ is a complex vector
bundle with a Hermitian metric $h$ and a unitary connection $\nabla$, and
\dis{\phi\in\frac{\Omega^{\odd}(X)}{\im(d)}}. The only relation is $\E_1=\E_2$ if and
only if there exists a generator $(F, h^F, \nabla^F, \phi^F)$ of $\wh{K}_{\FL}(X)$ such
that $E_1\oplus F\cong E_2\oplus F$ and $\phi_1-\phi_2=\CS(\nabla^{E_2}\oplus\nabla^F,
\nabla^{E_1}\oplus\nabla^F)$.

There is an exact sequence \cite[(2.20)]{FL10}
\begin{equation}\label{eq 3}
\begin{CD}
0 @>>> K^{-1}_{\LL}(X; \R/\Z) @>i>> \wh{K}_{\FL}(X) @>\ch_{\wh{K}}>> \Omega^{\even}_{\BU}(X)
@>>> 0
\end{CD}
\end{equation}
where $K^{-1}_{\LL}(X; \R/\Z)$ is the geometric model of $\R/\Z$ $K$-theory \cite{L94}, $i$
is the canonical inclusion map,
$$\Omega^{\even}_{\BU}(X)=\set{\omega\in\Omega^{\even}_{d=0}(X)|[\omega]\in\im(r\circ\ch:
K^0(X)\to H^{\even}(X; \R))},$$
and $\ch_{\wh{K}_{\FL}}(E, h, \nabla, \phi):=\ch(\nabla)+d\phi$. Elements in $K^{-1}_{\LL}
(X; \R/\Z)$ are required to have virtual rank zero. The canonical inclusion map $i$ in
(\ref{eq 3}) is defined by $i(E, h, \nabla, \phi)=(E, h, \nabla, \phi)$.

Let $X\to B$ and $Y\to B$ be fiber bundles of smooth manifolds with $X$ compact. Let
$g^{T^VX}$ and $g^{T^VY}$ be metrics on the vertical bundles $T^VX\to X$ and $T^VY\to Y$
respectively, and assume there are horizontal distributions $T^HX$ and $T^HY$. Let
$\E=(E, h^E, \nabla^E, \phi)\in\wh{K}_{\FL}(X)$ and $\iota:X\hookrightarrow Y$ be an
embedding of manifolds. We assume the codimension of $X$ in $Y$ is even, and the normal
bundle $\nu\to X$ of $X$ in $Y$ carries a $\spin^c$ structure. As in \cite[\S 5]{FL10}
we assume for each $b\in B$, the map $\iota_b:X_b\to Y_b$ is an isometric embedding
and is compatible with projections to $B$. Denote by $\SSS(\nu)\to X$ the spinor bundle
associated to the $\spin^c$-structure of $\nu\to X$. We can locally choose a $\spin$
stricture for $\nu\to X$ with spinor bundle $\SSS^{\spin}(\nu)$. Then there exists a
locally defined Hermitian line bundle $L^{\frac{1}{2}}(\nu)$ such that $\SSS(\nu)\cong
\SSS^{\spin}(\nu)\otimes L^{\frac{1}{2}}(\nu)$. Note that the tensor product on the right
is globally defined, and so is the Hermitian line bundle $L(\nu)\to X$ defined by
$L(\nu):=(L^{\frac{1}{2}}(\nu))^2$. Let $\nabla^\nu$ be a metric compatible connection on
$\nu\to X$. It has a unique lift to a connection on $\SSS^{\spin}(\nu)$, still denoted by
$\nabla^\nu$. Choose a unitary connection $\nabla^{L(\nu)}$ on $L(\nu)\to X$, which induces
a connection on $L^{\frac{1}{2}}(\nu)$. The tensor product of $\nabla^\nu$ and the induced
connection on $L^{\frac{1}{2}}(\nu)$ is a connection on $\SSS(\nu)\to X$, denoted by
$\wh{\nabla}^\nu$. Define
$$\todd(\wh{\nabla}^\nu):=\wh{A}(\nabla^\nu)\wedge e^{\frac{1}{2}c_1(\nabla^{L(\nu)})}.$$
The embedding pushforward $\wh{\iota}_*:\wh{K}_{\FL}(X)\to{}_\delta\wh{K}_{\FL}(Y)$
\cite[Definition 4.14]{FL10} is defined to be
$$\wh{\iota}_*(\E)=\bigg(F, h^F, \nabla^F, \frac{\phi^E}{\todd(\wh{\nabla}^\nu)}\wedge
\delta_X-\gamma\bigg),$$
where ${}_\delta\wh{K}_{\FL}(Y)$ is the currential $K$-group, $\delta_X$ is the current of
integration over $X$ and $\gamma$ is the Bismut-Zhang current. $(F, h^F, \nabla^F)$ is a
Hermitian bundle with a Hermitian metric and a unitary connection chosen as in
\cite[Lemma 4.4]{FL10}. Note that $\gamma$ satisfies the following transgression formula
\cite[Theorem 1.4]{BZ93}
$$d\gamma=\ch(\nabla^F)-\frac{\ch(\nabla^E)}{\todd(\wh{\nabla}^\nu)}\wedge\delta_X.$$
As noted in \cite[p.926]{FL10} the horizontal distributions of the fiber bundles $X\to B$
and $Y\to B$ need not be compatible. An odd form \dis{\wt{C}\in\frac{\Omega^{\odd}(X)}{\im
(d)}} is defined to correct this non-compatibility, and it satisfies the following
transgression formula \cite[(5.6)]{FL10}
$$d\wt{C}=\iota^*\todd(\wh{\nabla}^{T^VY})-\todd(\wh{\nabla}^{T^VX})\wedge\todd
(\wh{\nabla}^\nu).$$
The modified embedding pushforward $\wh{\iota}^{\mo}_*:\wh{K}_{\FL}(X)\to{}_{\WF}
\wh{K}_{\FL}(Y)$ \cite[Definition 5.8]{FL10} is defined to be
\begin{equation}\label{eq 13}
\wh{\iota}_*^{\mo}(\E):=\wh{\iota}_*(\E)-j\bigg(\frac{\wt{C}}{\iota^*\todd
(\wh{\nabla}^{T^VY})\wedge\todd(\wh{\nabla}^\nu)}\wedge\ch_{\wh{K}_{\FL}}(\E)\wedge
\delta_X\bigg).
\end{equation}
See \cite[\S 3.1]{FL10} for the definition of ${}_{\WF}\wh{K}_{\FL}(X)$.

The differential topological index $\ind^{\topo}_{\FL}:\wh{K}_{\FL}(X)\to\wh{K}_{\FL}
(B)$ \cite[Definition 5.34]{FL10} is defined by taking $Y=\sss^N\times B$ for some even
$N$ and composing the embedding pushforward with the submersion pushforward
$\wh{\pi}^{\pro}_*$ defined in \cite[Lemma 5.13]{FL10}, i.e., $\ind^{\topo}_{\FL}:=
\wh{\pi}^{\pro}_*\circ\wh{\iota}^{\mo}_*$.

\subsection{Pairing between flat $K$-theory and topological $K$-homology}

Let $X$ be an odd-dimensional closed $\spin^c$ manifold. Let $\E=(E, h^E, \nabla^E, \phi)
\in{}_\delta\wh{K}_{\FL}(X)$, and $\DD^{X, E}$ be the twisted Dirac operator on $\SSS(X)
\otimes E\to X$. A modified reduced eta-invariant $\bar{\eta}(X, \E)\in\R/\Z$
\cite[Definition 2.33]{FL10} is defined by
$$\bar{\eta}(X, \E):=\bar{\eta}(\DD^{X, E})+\int_X\todd(\wh{\nabla}^{TX})\wedge\phi
\mod\Z.$$
$\bar{\eta}:{}_\delta\wh{K}_{\FL}(X)\to\R/\Z$ is a well defined homomorphism
\cite[Prop 2.25]{FL10}. If $\E$ is a generator of $K^{-1}_{\LL}(X; \R/\Z)$, by
\cite[(2.37)]{FL10} we have
\begin{equation}\label{eq 15}
\bar{\eta}(X, i(\E))=\inner{[X], \E},
\end{equation}
where $[X]\in K_{-1}(X)$ is the fundamental $K$-homology class. Here $\inner{[X], \E}$
is the perfect pairing between flat $K$-theory and topological $K$-homology
\cite[Prop 3]{L94}
\begin{equation}\label{eq 16}
K^{-1}_{\LL}(X; \R/\Z)\times K_{-1}(X)\to\R/\Z.
\end{equation}

\subsection{Bunke-Schick differential $K$-theory}

In this subsection we briefly recall Bunke-Schick differential $K$-theory $\wh{K}_{\BS}$,
and refer to \cite{BS09} for the details.

A generator of $\wh{K}_{\BS}(B)$ is of the form $(\EEE, \phi)$, where $\EEE$ is an
even-dimensional geometric family \cite[Definition 2.2]{BS09} over a compact manifold $B$
and \dis{\phi\in\frac{\Omega^{\odd}(B)}{\im(d)}}. Roughly speaking a geometric family over
$B$ is the geometric data needed to construct the index bundle. There is a well defined
notion of isomorphic and sum of generators \cite[Definition 2.5, 2.6]{BS09}. Two geometric
families $(\EEE_0, \phi_0)$ and $(\EEE_1, \phi_1)$ are equivalent if there exists a geometric
family $(\EEE', \phi')$ such that $(\EEE_0, \rho_0)+(\EEE', \phi')$ is paired with
$(\EEE_1, \rho_1)+(\EEE', \phi')$ \cite[Definition 2.10, Lemma 2.13]{BS09}. Two generators
$(\EEE_0, \phi_0)$ and $(\EEE_1, \phi_1)$ are paired if
$$\rho_1-\rho_0=\eta^{\B}((\EEE_0\sqcup_B(\EEE_1)^{\op})_t),\footnote{It differs by a sign in
\cite{BS09}.}$$
where $(\EEE\sqcup_B(\EEE')^{\op})_t$ is a certain tamed geometric family
\cite[Definition 2.7]{BS09}, and $\eta^{\B}$ is the Bunke eta form \cite{B09}.

As noted in \cite[2.14]{BS09} and \cite[4.2.1]{B09}, a complex vector bundle $E\to B$ with
a Hermitian metric $h^E$ and a unitary connection $\nabla^E$ can be naturally considered
as a zero-dimensional geometric family over $B$, denoted by $\EE$.

\section{Main results}
\subsection{Compatibility of the topological indices}

Note that every element $\E-\F\in\wh{K}_{\FL}(X)$ can be written in the form
$$\wt{\E}-[n].$$
Here $\wt{\E}=(E\oplus G, h^E\oplus h^G, \nabla^E\oplus\nabla^G, \phi^E+\phi^G)$, where
$(G, h^G, \nabla^G, \phi^G)$ is a generator of $\wh{K}_{\FL}(X)$ such that
$$(F\oplus G, h^F\oplus h^G, \nabla^F\oplus\nabla^G, \phi^F+\phi^G)=(\C^n, h, d, 0)=:[n].$$
The existence of the connection $\nabla^G$ such that $\CS(\nabla^F\oplus\nabla^G, d)=0$,
where $d$ is the trivial connection on the trivial bundle $X\times\C^n\to X$, follows
from \cite[Theorem 1.8]{SS10}. Here $\phi^G:=-\phi^F$. Henceforth we assume an element of
$\wh{K}_{\FL}(X; \R/\Z)$ is of the form $\E-[n]$. These arguments also apply to elements
in $K^{-1}_{\LL}(X; \R/\Z)$.
\begin{prop}\label{prop 5}
Let $\pi:X\to B$ be a fiber bundle with $X$ compact and such that the fibers are of even
dimension. The following diagram commutes.
\cdd{K^{-1}_{\LL}(X; \R/\Z) @>i>> \wh{K}_{\FL}(X) \\ @V\ind^{\topo}VV @VV\ind^{\topo}_{\FL}V
\\ K^{-1}_{\LL}(B; \R/\Z) @>>i> \wh{K}_{\FL}(B)}
\end{prop}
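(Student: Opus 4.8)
The plan is, for a general $\omega\in K^{-1}_{\LL}(X;\R/\Z)$, to compute $\ind^{\topo}_{\FL}(i(\omega))$ directly from the definition $\ind^{\topo}_{\FL}=\wh{\pi}^{\pro}_*\circ\wh{\iota}^{\mo}_*$ (with $Y=\sss^N\times B$), to check that the result is flat, i.e.\ lies in $i(K^{-1}_{\LL}(B;\R/\Z))$, and to identify it with $i(\ind^{\topo}(\omega))$. Using the reduction preceding the statement I would write $\omega=\E-[n]$ with $\E=(E,h^E,\nabla^E,\phi^E)$ and $\rk E=n$; since $\omega$ lies in the image of $i$ in the exact sequence (\ref{eq 3}), equivalently in $\ker(\ch_{\wh{K}_{\FL}})$, we have $\ch(\nabla^E)+d\phi^E=n$, so that $\ch_{\wh{K}_{\FL}}(\omega)=0$ is a constant (hence closed) form.

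First I would show $\ind^{\topo}_{\FL}(i(\omega))\in i(K^{-1}_{\LL}(B;\R/\Z))$. Differentiating the transgression formulas for $\gamma$ and $\wt{C}$ recalled above, using that $X$ is closed (so $d\delta_X=0$) and that $\ch_{\wh{K}_{\FL}}(\E)$ is closed, a short computation from the formula for $\wh{\iota}_*$ and (\ref{eq 13}) gives, for any generator $\E$,
\[
\ch_{\wh{K}_{\FL}}\bigl(\wh{\iota}^{\mo}_*(\E)\bigr)=\frac{\todd(\wh{\nabla}^{T^VX})}{\iota^*\todd(\wh{\nabla}^{T^VY})}\wedge\ch_{\wh{K}_{\FL}}(\E)\wedge\delta_X,
\]
so by additivity $\ch_{\wh{K}_{\FL}}(\wh{\iota}^{\mo}_*(i(\omega)))=0$; since $\wh{\pi}^{\pro}_*$ intertwines $\ch_{\wh{K}_{\FL}}$ with fiberwise integration against the Todd form of $\sss^N\times B\to B$, also $\ch_{\wh{K}_{\FL}}(\ind^{\topo}_{\FL}(i(\omega)))=0$. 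By exactness of (\ref{eq 3}) over $B$ and injectivity of $i$, there is a unique $\alpha\in K^{-1}_{\LL}(B;\R/\Z)$ with $i(\alpha)=\ind^{\topo}_{\FL}(i(\omega))$.

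The main part is then to show $\alpha=\ind^{\topo}(\omega)$ by a direct computation. Expanding $i(\alpha)=\wh{\pi}^{\pro}_*\wh{\iota}^{\mo}_*(i(\omega))$, the currential generator produced over $Y$ carries the odd current $\frac{\phi^E}{\todd(\wh{\nabla}^\nu)}\wedge\delta_X-\gamma$ corrected by the $\wt{C}$-term of (\ref{eq 13}), together with the bundle $(F,h^F,\nabla^F)$ of \cite[Lemma 4.4]{FL10}. Because $\ch(\nabla^E)+d\phi^E=n$ is constant, the Bismut--Zhang transgression formula and the transgression formula for $\wt{C}$ let these currents be rewritten and then carried through the submersion pushforward of \cite[Lemma 5.13]{FL10}; the current of integration $\delta_X$ localizes the sphere-fiber integral to $X/B$, and one is left on $B$ with the $K$-theory class underlying $\ind^{\topo}(\omega)$ together with the smooth odd form assembled from $\int_{X/B}\todd(\wh{\nabla}^{T^VX})\wedge\phi^E$, the relevant eta form of the sphere bundle $\sss^N\times B\to B$, and the Chern--Simons terms coming from $\nabla^F$ and $\wt{C}$ -- precisely the ingredients of Lott's flat topological index $\ind^{\topo}$ on $K^{-1}_{\LL}(-;\R/\Z)$ (cf.\ \cite{L94}). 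Comparing odd forms modulo $\im(d)$ and invoking the relation defining $\wh{K}_{\FL}$ gives $\alpha=\ind^{\topo}(\omega)$. Alternatively, since the pairing (\ref{eq 16}) is perfect, it suffices to verify $\inner{f_*[N],\alpha}=\inner{f_*[N],\ind^{\topo}(\omega)}$ on Baum--Douglas generators $f\colon N\to B$ with $N$ a closed odd-dimensional $\spin^c$ manifold; by (\ref{eq 15}) and naturality of both indices under the pullback bundle $X_N=X\times_B N$ this reduces to the same currential computation, now over $X_N\to N$.

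I expect the main obstacle to be this last step: pushing the Bismut--Zhang current $\gamma$ and the current of integration $\delta_X$ forward through $\wh{\pi}^{\pro}_*$ and matching the resulting eta-form and Chern--Simons corrections term by term with Lott's formula for $\ind^{\topo}$. This is exactly where currential $K$-theory enters, and one must be attentive to signs and to the $\wh{A}$ versus $\todd$ normalization conventions throughout.
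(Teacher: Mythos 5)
Your first step --- computing
$\ch_{\wh{K}_{\FL}}(\wh{\iota}^{\mo}_*(\E))=\frac{\todd(\wh{\nabla}^{T^VX})}{\iota^*\todd(\wh{\nabla}^{T^VY})}\wedge\ch_{\wh{K}_{\FL}}(\E)\wedge\delta_X$
from the two transgression formulas and concluding that $\ind^{\topo}_{\FL}(i(\omega))$ lies in $i(K^{-1}_{\LL}(B;\R/\Z))$ --- is correct and is essentially how the paper begins (it packages the same computation as \cite[Lemma 5.36]{FL10}). The problem is the second half. Your primary route, pushing $\gamma$, $\delta_X$ and the $\wt{C}$-correction through $\wh{\pi}^{\pro}_*$ and matching the resulting eta forms and Chern--Simons terms against Lott's formula for $\ind^{\topo}$, is exactly the step you yourself flag as ``the main obstacle,'' and you do not carry it out; as written this is a gap, not a proof.

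Your alternative route (pair with Baum--Douglas generators, reduce to $B$ a closed odd-dimensional $\spin^c$ manifold, compare via (\ref{eq 15})) is the paper's actual strategy, but the paper does \emph{not} then redo the currential computation as you suggest. Instead it quotes the already-established formula \cite[(6.7)]{FL10} for $\bar{\eta}(B,\ind^{\topo}_{\FL}(\E-[n]))$ and Lott's \cite[(49)]{L94} for $\bar{\eta}(B,i(\ind^{\topo}(\E'-[n]')))$, and the difference of the two expressions collapses to
\[
\int_X\Bigl(\frac{\iota^*\todd(\wh{\nabla}^{T(\sss^N\times B)})-\todd(\wh{\nabla}^{TX})\wedge\todd(\wh{\nabla}^\nu)}{\todd(\wh{\nabla}^\nu)}\Bigr)\wedge\phi^E.
\]
The missing idea is why this vanishes: since $\ch_{\wh{K}_{\FL}}(\E-[n])=0$, the correction term in (\ref{eq 13}) drops out, so $\wh{\iota}^{\mo}_*=\wh{\iota}_*$ on the class in question, and one is therefore free to take the horizontal distribution on $X$ to be the restriction of the one on $\sss^N\times B$; with that choice $\iota^*\todd(\wh{\nabla}^{T(\sss^N\times B)})=\todd(\wh{\nabla}^{TX})\wedge\todd(\wh{\nabla}^\nu)$ and the integrand is identically zero. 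Without this observation (or some equivalent handling of the non-compatibility of the horizontal distributions, which is precisely what $\wt{C}$ measures) neither of your two routes closes.
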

\begin{proof}
Let $\E'-[n]'\in K^{-1}_{\LL}(X)$ and write $\E-[n]=i(\E'-[n]')$, where $i$ is given in
(\ref{eq 3}). Consider the difference
$$h:=\ind^{\topo}_{\FL}(\E-[n])-i(\ind^{\topo}(\E'-[n]')).$$
We prove that $h=0$. By \cite[Lemma 5.36]{FL10} and the fact that $\ch_{\wh{K}_{\FL}}\circ
i=0$ (see (\ref{eq 3})), we have
\begin{displaymath}
\begin{split}
&\qquad\ch_{\wh{K}_{\FL}}(\ind^{\topo}_{\FL}(\E-[n]))-\ch_{\wh{K}_{\FL}}(i
(\ind^{\topo}(\E'-[n]')))\\
&=\ch_{\wh{K}_{\FL}}(\ind^{\topo}_{\FL}(\E-[n]))\\
&=\int_{X/B}\todd(\wh{\nabla}^{T^VX})\wedge(\ch(\nabla^E)-\rk(E)+d\phi^E)\\
&=0.
\end{split}
\end{displaymath}
By (\ref{eq 3}), there exists an element $a\in K^{-1}(B; \R/\Z)$ such that $i(a)=h$. To
prove $a=0\in K^{-1}_{\LL}(B; \R/\Z)$, it follows from (\ref{eq 16}) that it is sufficient
to show that for all $\alpha\in K_{-1}(B; \Z)$,
\begin{equation}\label{eq 29}
\inner{\alpha, a}=0\in\R/\Z.
\end{equation}
Using the geometric picture of $K$-homology \cite{BHS07}, we may, without loss of generality,
let $\alpha=f_*[M]$ for some smooth map $f:M\to B$, where $M$ is a closed odd-dimensional
$\spin^c$ manifold, and $[M]$ is the fundamental $K$-homology in $K_{-1}(M)$. Since
$\inner{\alpha, a}=\inner{[M], f^*a}$, we pull everything back to $M$ and we may assume
$B$ is an arbitrary closed odd-dimensional $\spin^c$ manifold. Thus proving (\ref{eq 29})
is equivalent to proving
\begin{equation}\label{eq 30}
\inner{[B], a}=0\in\R/\Z.
\end{equation}
Since
$$\inner{[B], a}=\bar{\eta}(B, \ind^{\topo}_{\FL}(\E-[n]))-\bar{\eta}(B, i(\ind^{\topo}
(\E'-[n]')))\mod\Z,$$
proving (\ref{eq 30}) is equivalent to proving
\begin{equation}\label{eq 31}
\bar{\eta}(B, \ind^{\topo}_{\FL}(\E-[n]))=\bar{\eta}(B, i(\ind^{\topo}(\E'-[n]')))
\mod\Z.
\end{equation}
In the following, we write $a\equiv b$ as $a=b\mod\Z$. By \cite[(6.7)]{FL10}, we have
\begin{equation}\label{eq 32}
\begin{split}
\bar{\eta}(B, \ind^{\topo}_{\FL}(\E-[n]))&\equiv\bar{\eta}(\DD^{X, E-n})+\int_X
\frac{\iota^*\todd(\wh{\nabla}^{T(\sss^N\times B)})}{\todd(\wh{\nabla}^\nu)}\wedge
\phi^E\\
&\qquad-\int_X\frac{\pi^*\todd(\wh{\nabla}^{TB})}{\todd(\wh{\nabla}^\nu)}\wedge\wt{C}
\wedge\ch_{\wh{K}_{\FL}}(\E-[n])\\
&\equiv\bar{\eta}(\DD^{X, E-n})+\int_X\frac{\iota^*\todd(\wh{\nabla}^{T(\sss^N\times
B)})}{\todd(\wh{\nabla}^\nu)}\wedge\phi^E
\end{split}
\end{equation}
as $\ch_{\wh{K}_{\FL}}(\E-[n])=\ch_{\wh{K}_{\FL}}(i(\E'-[n]'))=0$. On the other hand, by
\cite[(49)]{L94}, we have
\begin{equation}\label{eq 33}
\begin{split}
\bar{\eta}(B, i(\ind^{\topo}(\E'-[n]')))&\equiv\inner{[B], \ind^{\topo}
(\E'-[n]')}\\
&=\inner{\pi^![B], \E-[n]}=\inner{[X], \E-[n]}=\bar{\eta}(X, \E-[n])\\
&\equiv\bar{\eta}(\DD^{X, E-n})+\int_X\todd(\wh{\nabla}^{TX})\wedge\phi^E.
\end{split}
\end{equation}
From (\ref{eq 32}) and (\ref{eq 33}) we have
\begin{equation}\label{eq 34}
\begin{split}
&\qquad\bar{\eta}(B, \ind^{\topo}_{\FL}(\E-[n]))-\bar{\eta}(B, i(\ind^{\topo}
(\E'-[n]')))\\
&\equiv\int_X\bigg(\frac{\iota^*\todd(\wh{\nabla}^{T(\sss^N\times
B)})}{\todd(\wh{\nabla}^\nu)}-\todd(\wh{\nabla}^{TX})\bigg)\wedge\phi^E\\
&\equiv\int_X\bigg(\frac{\iota^*\todd(\wh{\nabla}^{T(\sss^N\times
B)})-\todd(\wh{\nabla}^{TX})\wedge\todd(\wh{\nabla}^\nu)}{\todd(\wh{\nabla}^\nu)}\bigg)
\wedge\phi^E.
\end{split}
\end{equation}
Since $\ch_{\wh{K}_{\FL}}(\E-[n])=0$, it follows from (\ref{eq 13}) that
\begin{equation}\label{eq 99}
\wh{\iota}^{\mo}_*(\E-[n])=\wh{\iota}_*(\E-[n]).
\end{equation}
Recall that the purpose of the modified embedding pushforward $\wh{\iota}^{\mo}_*$ is to
correct the non-compatibility of the horizontal distributions $T^H(\sss^N\times B)$
and $T^HX$. By (\ref{eq 99}) we may assume that the horizontal distributions $T^H(\sss^N
\times B)$ and $T^HX$ are compatible by changing the one for $X$ to be the restriction of
the one for $\sss^N\times B$. Thus
$$\iota^*\todd(\wh{\nabla}^{T(\sss^N\times B)})=\todd(\wh{\nabla}^{TX})\wedge\todd
(\wh{\nabla}^\nu),$$
which implies that (\ref{eq 34}) is zero, and therefore $h=0$.
\end{proof}

\subsection{Explicit isomorphisms between $\wh{K}_{\BS}$ and $\wh{K}_{\FL}$}

In this subsection we construct the explicit isomorphisms between Bunke-Schick differential
$K$-group and the Freed-Lott differential $K$-group.
\begin{prop}
Let $B$ be a compact manifold. Define two maps $f:\wh{K}_{\FL}(B)\to\wh{K}_{\BS}(B)$ and
$g:\wh{K}_{\BS}(B)\to\wh{K}_{\FL}(B)$ by
\begin{displaymath}
\begin{split}
f(E, h, \nabla, \phi)&=[\EE, \phi],\\
g([\EEE, \phi])&=(\ind^{\an}(\EEE), h^{\ind^{\an}(\EEE)}, \nabla^{\ind^{\an}(\EEE)}, \phi),
\end{split}
\end{displaymath}
where, in the definition of $f$, $\EE$ is the zero-dimensional geometric family associated to
$(E, h, \nabla)$. Then $f$ and $g$ are well defined ring isomorphisms and are inverses to
each other.
\end{prop}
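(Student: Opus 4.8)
The plan is to verify that $f$ and $g$ are well defined, that they are mutually inverse, and that they are ring homomorphisms, treating these three points in turn. The last one is essentially formal once the first two are settled, so the real content is in the first two steps.

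\textbf{Well-definedness of $f$.} I would first check that $f$ respects the relations defining $\wh{K}_{\FL}(B)$. If $(E_1,h_1,\nabla_1,\phi_1)$ and $(E_2,h_2,\nabla_2,\phi_2)$ differ by a generator $(F,h^F,\nabla^F,\phi^F)$ with $E_1\oplus F\cong E_2\oplus F$ and $\phi_1-\phi_2=\CS(\nabla^{E_2}\oplus\nabla^F,\nabla^{E_1}\oplus\nabla^F)$, then I must show $[\mathcal{E}_1,\phi_1]=[\mathcal{E}_2,\phi_2]$ in $\wh{K}_{\BS}(B)$. The key computation is that for a zero-dimensional geometric family coming from a Hermitian bundle with connection, the relevant Bunke eta form $\eta^{\B}$ of the tamed family $(\mathcal{E}_1\sqcup_B(\mathcal{E}_2)^{\op})_t$ reduces to the Chern--Simons form $\CS(\nabla^{E_1},\nabla^{E_2})$ (up to the chosen sign convention noted in the footnote); this is exactly the sort of identification made in \cite[2.14]{BS09} and \cite{B09} for zero-dimensional families, where the ``index theory'' degenerates to ordinary Chern--Weil theory. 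Granting that, the pairing relation defining equivalence in $\wh{K}_{\BS}$ matches the Freed--Lott relation, so $f$ descends to the quotient. I would also note that $f$ is clearly additive on generators since $\mathcal{E}_1\oplus\mathcal{E}_2$ as a zero-dimensional family is $\mathcal{E}_1\sqcup_B\mathcal{E}_2$ and the forms add.

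\textbf{Well-definedness of $g$ and the inverse property.} For $g$, I would invoke \cite[Lemma 4.4]{FL10}-type choices: the index bundle $\ind^{\an}(\EEE)$ of an even-dimensional geometric family carries a natural $L^2$-metric and Bismut--Cheeger connection, and the point is that different choices of geometric data change $(\ind^{\an}(\EEE),h,\nabla)$ only by a Chern--Simons correction absorbed into $\phi$, so $g$ is well defined; the curvature/Chern character bookkeeping is governed by the local index theorem, $\ch(\nabla^{\ind^{\an}(\EEE)})=\int_{X/B}\todd(\wh\nabla^{T^VX})\wedge\ch(\nabla^{\text{twist}}) - d\eta^{\B}$. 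Then $g\circ f=\id$ is immediate because the index bundle of the zero-dimensional family $\mathcal{E}$ is just $(E,h,\nabla)$ itself, and the form component $\phi$ is untouched. The reverse composite $f\circ g=\id$ is the substantive direction: given $[\EEE,\phi]$, I must show $[\mathcal{E}_{\ind^{\an}(\EEE)},\phi]=[\EEE,\phi]$ in $\wh{K}_{\BS}$, i.e.\ that a geometric family is equivalent to (the zero-dimensional family of) its index bundle with the eta-form correction folded in. This is precisely the content behind \cite[Corollary 5.5]{BS09} (cited as the inspiration), which says every class has such a representative; I would spell out that $\EEE\sqcup_B(\mathcal{E}_{\ind^{\an}(\EEE)})^{\op}$ is tamed and its Bunke eta form is the transgression realizing the needed relation, so after adding a suitable auxiliary family the two generators are paired.

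\textbf{Ring structure.} Finally, multiplicativity: the product on $\wh{K}_{\FL}$ on generators is $(E_1,h_1,\nabla_1,\phi_1)\cdot(E_2,h_2,\nabla_2,\phi_2)=(E_1\otimes E_2,\ldots,\ch(\nabla_1)\wedge\phi_2+\phi_1\wedge\ch(\nabla_2)+d\phi_1\wedge\phi_2)$ and similarly on $\wh{K}_{\BS}$ using the fiber-product of geometric families; since the zero-dimensional family of a tensor product is the fiber product of zero-dimensional families and the Chern characters agree, $f$ is a ring map, and then $g=f^{-1}$ is automatically one too. I expect the main obstacle to be the $f\circ g=\id$ step --- matching a general geometric family with its index bundle inside $\wh{K}_{\BS}$ --- since it requires the nontrivial geometric input of \cite[Corollary 5.5]{BS09} and careful tracking of the eta-form sign convention flagged in the footnote; the uniqueness of the isomorphism, and hence the fact that this is \emph{the} natural one, I would simply cite from \cite[Theorem 3.10]{BS10a}.
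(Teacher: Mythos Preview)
Your overall strategy matches the paper's: check $f$ is well defined via the identification of $\eta^{\B}$ with a Chern--Simons form on zero-dimensional families, check $g$ is well defined, verify $g\circ f=\id$ trivially and $f\circ g=\id$ via \cite[Corollary 5.5]{BS09}, then deduce the ring property formally. The paper also begins with the standard perturbation reduction so that $\ind^{\an}(\EEE)$ is an honest kernel bundle $\ker(\DD^E)$, which you should state.

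There is, however, one genuine gap in your bookkeeping. Taking the formula for $g$ at face value, the equation you write for $f\circ g=\id$, namely $[\mathcal{E}_{\ind^{\an}(\EEE)},\phi]=[\EEE,\phi]$, is \emph{false} in general: \cite[Corollary 5.5]{BS09} gives $[\EEE,\phi]=[\KK,\wt{\eta}(\EEE)+\phi]$, not $[\KK,\phi]$, where $\wt{\eta}(\EEE)$ is the Bismut--Cheeger eta form. The paper's key move, which you allude to (``eta-form correction folded in'') but never implement, is to use this corollary \emph{first} to rewrite
\[
g([\EEE,\phi])=g([\KK,\wt{\eta}(\EEE)+\phi])=(\ker(\DD^E),h^{\ker(\DD^E)},\nabla^{\ker(\DD^E)},\wt{\eta}(\EEE)+\phi),
\]
so that the eta form sits in the $\phi$-slot. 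With this rewriting, both the well-definedness of $g$ (your vague ``Chern--Simons correction absorbed into $\phi$'' becomes the precise statement that $\CS(\nabla^{\ker(\DD^{E_2})},\nabla^{\ker(\DD^{E_1})})$ matches $\wt{\eta}(\EEE_1)-\wt{\eta}(\EEE_2)+\phi_2-\phi_1$ via another application of $\eta^{\B}=\CS$ on the zero-dimensional families $\KK_i$) and the identity $f\circ g=\id$ become straightforward. Without making this explicit, your argument for $g$ and for $f\circ g$ does not close.
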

\begin{proof}
Note that it suffices to prove the statement under the assumption that $\ind^a(\EEE)\to B$
is actually given by a kernel bundle $\ker(\DD^E)\to B$ in the definition of $g$. Indeed,
by a standard perturbation argument every class in $\wh{K}_{\BS}$ has a representative
where this is satisfied.

First of all we prove that $f$ is well defined. Suppose
$$(E, h^E, \nabla^E, \phi^E)=(F, h^F, \nabla^F, \phi^F)\in\wh{K}_{\FL}(B).$$
Then there exists a generator $(G, h^G, \nabla^G, \phi^G)$ of $\wh{K}_{\FL}(B)$ such
that
\begin{equation}\label{eq 35}
\begin{split}
E\oplus G&\cong F\oplus G,\\
\phi^F-\phi^E&=\CS(\nabla^E\oplus\nabla^G, \nabla^F\oplus\nabla^G).
\end{split}
\end{equation}
Denote by $\FF$ and $\GG$ the zero-dimensional geometric families associated to $(F, h^F,
\nabla^F)$ and $(G, h^G, \nabla^G)$, respectively. We prove that $[\EE, \phi^E]=[\FF,
\phi^F]\in\wh{K}_{\BS}(B)$. Indeed, we prove that $(\EE+\GG, \phi^E+\phi^G)$ is paired
with $(\FF+\GG, \phi^F+\phi^G)$. We need to show $\EE\sqcup_B\GG\cong\FF\sqcup_B\GG$ and
\begin{equation}\label{eq 36}
(\phi^F+\phi^G)-(\phi^E+\phi^G)=\eta^{\B}(((\EE\sqcup_B\GG)\sqcup_B(\FF\sqcup_B
\GG)^{\op})_t)
\end{equation}
if such a taming exists. In the case of zero-dimensional geometric family, $\EE\sqcup_B\GG
\cong E\oplus G$ as vector bundles over $B$. Thus the first equality (\ref{eq 35}) implies
$\EE\sqcup_B\GG\cong\FF\sqcup_B\GG$. Since the underlying proper submersion of the trivial
geometric family is the identity map, the corresponding kernel bundle is just $E\to B$ by
the remark of \cite[Definition 4.7]{BS10}. Thus the taming in (\ref{eq 36}) exists and the
definition of $\eta^{\B}$ shows that
\begin{equation}\label{eq 37}
\eta^{\B}(((\EE\sqcup_B\GG)\sqcup_B(\FF\sqcup_B\GG)^{\op})_t)=\CS(\nabla^E\oplus\nabla^G,
\nabla^F\oplus\nabla^G).
\end{equation}
From (\ref{eq 35}) and (\ref{eq 36}) we see that $(\EE+\GG, \phi^E+\phi^G)$ is paired with
$(\FF+\GG, \phi^F+\phi^G)$. Thus $f$ is well defined.

For the map $g$, note that under our assumption we have $[\EEE, 0]=[\KK, \wt{\eta}(\EEE)]$
by \cite[Corollary 5.5]{BS09}, where $\KK$ is the trivial geometric family associated to
$(\ker(\DD^E), h^{\ker(\DD^E)}, \nabla^{\ker(\DD^E)})$ and $\wt{\eta}(\EEE)$ is the associated
Bismut-Cheeger eta form. Since $[\EEE, \phi]=[\KK, \wt{\eta}(\EEE)+\phi]$, $g$ can be written
as
$$g([\EEE, \phi])=g([\KK, \wt{\eta}(\EEE)+\phi])=(\ker(\DD^E), h^{\ker(\DD^E)},
\nabla^{\ker(\DD^E)}, \wt{\eta}(\EEE)+\phi).$$
We prove that $g$ is well defined. Suppose $[\EEE_1, \phi^1]=[\EEE_2, \phi^2]\in\wh{K}_{\BS}(B)$.
Since $[\EEE_i, \phi^i]=[\KK_i, \wt{\eta}(\EEE_i)+\phi^i]$ for $i=1, 2$, to prove $g([\EEE_1,
\phi^1])=g([\EEE_2, \phi^2])$ it suffices to show
\begin{equation}\label{eq 38}
\begin{split}
&~~~~~~(\ker(\DD^{E^1}), h^{\ker(\DD^{E^1})}, \nabla^{\ker(\DD^{E^1})}, \wt{\eta}(\EEE_1)+
\phi^1)\\
&=(\ker(\DD^{E^2}), h^{\ker(\DD^{E^2})}, \nabla^{\ker(\DD^{E^2})}, \wt{\eta}(\EEE_2)+\phi^2).
\end{split}
\end{equation}
Since $[\EEE_1, \phi^1]=[\EEE_2, \phi^2]$, there exists a taming $(\EEE_1\sqcup_B(\EEE_2)^{\op})_t$,
and therefore $\ker(\DD^{E^1})=\ker(\DD^{E^2})\in K(B)$. Thus it suffices to show
\begin{equation}\label{eq 39}
\CS(\nabla^{\ker(\DD^{E^2})}, \nabla^{\ker(\DD^{E^1})})=\wt{\eta}(\EEE_1)-\wt{\eta}(\EEE_2)+
\phi^2-\phi^1\in\frac{\Omega^{\odd}(B)}{\Omega^{\odd}_{\BU}(B)}
\end{equation}
by the exactness of \cite[(2.21)]{FL10}. Since
$$[\KK_1, \wt{\eta}(\EEE_1)+\phi^1]=[\EEE_1, \phi^1]=[\EEE_2, \phi^2]=[\KK_2, \wt{\eta}(\EEE_2)
+\phi^2],$$
it follows that there exists a taming $(\KK_2\sqcup_B(\KK_1)^{\op})_t$ such that
\begin{equation}\label{eq 40}
\wt{\eta}(\EEE_1)-\wt{\eta}(\EEE_2)+\phi^1-\phi^2=\eta^{\B}((\KK_2\sqcup_B(\KK_1)^{\op})_t).
\end{equation}
By the same reason as in (\ref{eq 37}) we have
\begin{equation}\label{eq 41}
\eta^{\B}((\KK_2\sqcup_B(\KK_1)^{\op})_t)=\CS(\nabla^{\ker(\DD^{E^2})},
\nabla^{\ker(\DD^{E^1})}).
\end{equation}
(\ref{eq 39}) follows by comparing (\ref{eq 40}) and (\ref{eq 41}). Thus $g$ is well defined.

We prove that $f$ and $g$ are inverses to each other. Let $(E, h, \nabla, \phi)$ be a generator
of $\wh{K}_{\FL}(B)$. Then
$$(g\circ f)(E, h, \nabla, \phi)=g[(\EE, \phi)]=(E, h, \nabla, \phi).$$
On the other hand, for a generator $(\EEE, \phi)$ of $\wh{K}_{\BS}(B)$,
\begin{displaymath}
\begin{split}
(f\circ g)([\EEE, \phi])&=f(\ker(\DD^E), h^{\ker(\DD^E)}, \nabla^{\ker(\DD^E)}, \wt{\eta}(\EEE)
+\phi)\\
&=[\KK, \wt{\eta}(\E)+\phi]\\
&=[\EEE, \phi]
\end{split}
\end{displaymath}
by \cite[Corollary 5.5]{BS09} again.

Since $f$ is a ring homomorphism, the same is true for $g$. Thus $f$ and $g$ are ring
isomorphisms and are inverses to each other.
\end{proof}
\bibliographystyle{amsplain}
\bibliography{MBib}
\end{document}